\pgfplotsset{compat=newest}
\newtheorem{proposition}{Proposition}
\newtheorem{theorem}{Theorem}
\newtheorem{lemma}{Lemma}
\newtheorem{definition}{Definition}
\newtheorem{remark}{Remark}
\DeclarePairedDelimiter{\norm}{\lVert}{\rVert} 
\DeclarePairedDelimiter{\abs}{\lvert}{\rvert} 
\DeclareMathOperator{\conv}{conv} 
\newcommand{\R}{\mathbb{R}} 
\def\square{{\setbox0=\hbox{X}\hbox to \ht0{\vrule\hss\vbox to \ht0{
  \hrule width \ht0\vfil\hrule width \ht0}\vrule}}}
\begin{document}
\pagestyle{empty}

\vskip 2cm \begin{center}{\huge Data Filtering for Cluster Analysis by $\ell_0$-Norm Regularization}\end{center}
\par\bigskip
\centerline{\Large A. Cristofari$^*$}
\par\bigskip\bigskip

\centerline{$^*$ Department of Computer, Control, and Management Engineering}
\centerline{Sapienza University of Rome}
\centerline{Via Ariosto, 25, 00185 Rome, Italy}
\par\medskip
\centerline{e-mail (Cristofari): cristofari@dis.uniroma1.it}

\par\bigskip\noindent {\small \centerline{\bf Abstract}}
A data filtering method for cluster analysis is proposed, based on minimizing a least squares function with a weighted $\ell_0$-norm penalty.
To overcome the discontinuity of the objective function, smooth non-convex functions are employed to approximate the $\ell_0$-norm.
The convergence of the global minimum points of the approximating problems towards global minimum points of the original problem is stated.
The proposed method also exploits a suitable technique to choose the penalty parameter.
Numerical results on synthetic and real data sets are finally provided,
showing how some existing clustering methods can take advantages from the proposed filtering strategy.
\bigskip\par\noindent
{\bf Keywords.} Zero-norm approximation Cluster analysis Nonlinear optimization.

\par\bigskip\noindent
{\bf AMS subject classifications.} 90C30. 62H30. 90C06. 49M15.
\pagestyle{plain} \setcounter{page}{1}

\section{Motivation}\label{introduction}
Cluster analysis is a branch of unsupervised learning, arising in many real-world applications and in different fields,
e.g., biology, medicine, marketing, document retrieval, image segmentation and many others.
It deals with grouping objects so that ``alike'' data are in the same clusters and ``unlike'' data are in different clusters.
More formally, given a finite set of vectors \mbox{$X = \{x_1,\dots,x_m\} \subset \R^n$}, we want to divide $X$ into $k$ groups (clusters), according to a defined measure of similarity, where $k$ can be either known or unknown.

Partitioning $X$ into a fixed number of clusters is known to be an NP-hard problem~\cite{garey:1979} and many existing clustering models are formulated as non-convex optimization problems. As a result, algorithms can generally find only approximate solutions. Moreover, there is no objectively ``right'' clustering model and the choice of the most suitable algorithm can strongly depend on the specific data set. So, there is still a great interest in developing new strategies for cluster analysis, also in the field of numerical optimization.

Here, we propose a data filtering method based on combining two different techniques.

The first one is a reformulation of the clustering problem as a penalized regression problem, proposed in~\cite{pelckmans:2005,hocking:2011,lindsten:2011}
and further studied in~\cite{pan:2013,chi:2014,marchetti:2014}.
Assuming that the number of clusters is unknown, this approach is based on introducing for each observation $x_i$ a centroid \mbox{$z_i \in \R^n$}, representing the cluster which $x_i$ belongs to. The problem consists in minimizing the distances between $x_i$ and $z_i$, trying at the same time to group centroids. This is obtained by adding to the objective function a term to penalize each pair $(i,j)$ such that $z_i \neq z_j$. The problem can be formulated as
\begin{equation}\label{general_model}
\min_{z \in \R^{mn}} \sum_{i=1}^m \norm{x_i - z_i}^2 + \lambda\sum_{j=2}^m\sum_{i=1}^{j-1}w_{ij}P(z_i-z_j),
\end{equation}
where we indicate with $z$ the vector $\begin{bmatrix}z_1^T & \dots & z_m^T\end{bmatrix}^T \in \R^{mn}$, $\lambda$ is a nonnegative penalty parameter,
$w_{ij}$ are nonnegative fixed parameters and $P\colon\R^n \rightarrow \R$ is a (symmetric) penalty function such that
\[
P(y)
\begin{cases}
=0, \quad &\text{if } y = 0, \\
> 0, \quad &\text{otherwise}.
\end{cases}
\]

The centroids provided by the solution $z^* = \begin{bmatrix}(z^*_1)^T & \dots & (z^*_m)^T\end{bmatrix}^T$ of~\eqref{general_model} represent the final clusters. Namely, $x_i$ and $x_j$ are in the same cluster if $z^*_i = z^*_j$.

The basic idea behind model~\eqref{general_model} is that a major number of centroids can be grouped simply by increasing the penalty parameter $\lambda$.

Anyway, when a fixed number of clusters is required, choosing a proper value of~$\lambda$ can be a very hard issue.
In fact, by increasing $\lambda$, we can have a larger number of pairs of coinciding centroids in the optimal solution, that is, a larger number of pairs of points that belong to the same cluster. But this does not provide information on the number of clusters we obtain. Consequently, a value of $\lambda$ that produces the desired number of clusters may not even exist.

Here, addressing the case in which a fixed number of clusters is required, we reinterpret model~\eqref{general_model} as a method to map each sample
$x_i$ by a vector $z_i$ that is representative of the local density of the samples in its neighborhood.

The proposed strategy also exploits a suitable technique to choose $\lambda$, based on minimizing a further optimality criterion that considers
the distances within and between clusters.

As regards the penalty function in~\eqref{general_model}, most authors focused on using convex \mbox{$\ell_q$-norms} (e.g., the {$\ell_1$-norm},
or the $\ell_2$-norm), so that problem~\eqref{general_model} is convex.
In order to avoid the bias generated by convex penalties \cite{fan:2001,shen:2012}, some non-convex ones were proposed in~\cite{pan:2013,marchetti:2014}.
On the other hand, the latter have the disadvantage not to make possible to reach the global minimum.

Here, we start from the following observation: since the penalty term in~\eqref{general_model} has only the goal to force some pairs of centroids to coincide, then $P(z_i - z_j)$ should assume a constant value if $z_i \ne z_j$ (i.e., if $x_i$ and $x_j$ are in different clusters), regardless how far $z_i$ and $z_j$
are from each other. Furthermore, the penalty associated with each pair $(i,j)$ should be weighted by taking into account the distance (i.e., the similarity)
between the samples $x_i$ and $x_j$, so that close pairs of points are encouraged to be in the same cluster.

Therefore, weighted $\ell_0$-norm penalties are employed in this paper.
To overcome the non-continuity of the objective function, the $\ell_0$-norm is then approximated with a sequence of smooth non-convex functions that
converges to the $\ell_0$-norm pointwise.
As to be shown, the convergence of the global optimal solutions of the approximating problems towards global optimal solutions of the original problem
can be proved.

The rest of the paper is organized as follows. In Section~\ref{sec:model}, we introduce the \mbox{$\ell_0$-norm} penalty clustering model and its smooth approximation, discussing some theoretical aspects. In Section~\ref{sec:filt_method}, we present the data filtering method.
In Section~\ref{sec:numerical_results}, we show the numerical results.
Finally, in Section~\ref{sec:conclusions}, we draw some conclusions.

From now on, we indicate with $\norm{\cdot}$ the Euclidean norm. Given $v \in \R^n$, we indicate with $(v)_h$ the $h$-th component of $v$, and with $\mathcal{B}(v,\rho)$ the open ball with center $v$ and radius $\rho$. Given a set $S$, we indicate with $\abs S$ its cardinality.

\section{The Model}\label{sec:model}
In this section, we introduce the clustering model with $\ell_0$-norm regularization and its smooth approximation, pointing out the relations between them.
Since this is only the starting point for the proposed data filtering method, we do not address the issues concerning the choice of the penalty parameter, that will be discussed in Section~\ref{sec:filt_method}.

\subsection{The $\ell_0$-Regularized Least Squares Problem}\label{sub:the_clustering_model}
Let $X = \{x_1,\dots,x_m\} \subset \R^n$ be a finite set of vectors and let us consider problem~\eqref{general_model}. As discussed in the previous section, our goal is to employ a penalty function satisfying the following condition for each pair $(i,j)$:
\[
P(z_i-z_j) =
\begin{cases}
0, & \quad \text{ if } z_i = z_j, \\
1, & \quad \text{otherwise},
\end{cases}
\]
that is, $P(z_i-z_j)$ must not depend on the distance between the centroids $z_i$ and $z_j$.
We also want to weigh $P(z_i-z_j)$ by a parameter $w_{ij}$ that takes into account the proximity of the samples $x_i$ and $x_j$.
In particular, $w_{ij}$ should be large if the samples $x_i$ and $x_j$ are near each other, so that close pairs of points are more strongly encouraged to be in the same cluster.

In other words, we want that the penalty value associated with each pair $(i,j)$ depends on the distance between the samples $x_i$ and $x_j$, but not on the distance between the centroids $z_i$ and $z_j$.
This leads to formulate the problem as follows:

\begin{equation}\label{zero-norm_model}
\min_{z \in \R^{mn}} \sum_{i=1}^m \norm{x_i - z_i}^2 + \lambda\sum_{j=2}^m\sum_{i=1}^{j-1}w_{ij}\,s\bigl(\norm{z_i-z_j}\bigr),
\end{equation}
where $s\colon\R\rightarrow\{0,1\}$ is the step function defined as
\begin{equation}\label{step_funct}
s(u) =
\begin{cases}
0, \quad &\text{if } u = 0, \\
1, \quad &\text{otherwise},
\end{cases}
\end{equation}
and $w_{ij}$ are inversely proportional to the distance between $x_i$ and $x_j$.

We observe that the penalty term can be seen as a weighted $\ell_0$-norm of the vector with components $\norm{z_i-z_j}$.
Namely, we seek a solution $z^*$ minimizing $\sum_{i=1}^m \norm{x_i - z_i}^2$, such that the vector $\begin{bmatrix}\norm{z_i - z_j}\end{bmatrix}_{i<j}$ is sufficiently sparse.

\begin{remark}
Problem~\eqref{zero-norm_model} is well defined, in the sense that it attains a minimizer, since the objective function is lower semicontinuous and coercive~\cite{rockafellar:2009}.
\end{remark}

\subsection{The Smooth Approximating Problem}
Minimizing a non-continuous function is hard, then it is reasonable trying to approximate~\eqref{zero-norm_model} with a continuous and smooth problem.

Indicating with $\phi(z)$ the objective function of~\eqref{zero-norm_model}, we seek a smooth function $g(z;\alpha)$, depending on a parameter $\alpha$,
that converges to $\phi(z)$ pointwise. Namely, there must exist a sequence $\bigl\{\alpha^t\bigr\}$ such that
\begin{equation}\label{lim_app}
\lim_{t \to \infty} g\bigl(z;\alpha^t\bigr) = \phi(z), \quad \forall z \in \R^{mn}.
\end{equation}
Roughly speaking, we expect that the minimum points of $g\bigl(z;\alpha^t\bigr)$ are ``similar'' to those of $\phi(z)$ for suitable values of the index $t$.

Many smooth approximations of the $\ell_0$-norm were proposed in the literature. In particular, since the $\ell_0$-norm of a vector is given by the sum of step functions,
in~\cite{mangasarian:1996,bradley:1998} the authors approximated the step function~\eqref{step_funct}
with the following concave parametric function:
\begin{equation}\label{step_funct_approx_mangasarian}
\beta(u;\alpha) = 1 - e^{-\alpha u}, \quad u \ge 0, \quad \alpha > 0.
\end{equation}
This approach can be convenient when minimizing the $\ell_0$-norm of a vector over a polyhedral set admitting a vertex.
Exploiting the concavity of~\eqref{step_funct_approx_mangasarian},
it can be proved that there exists a finite index $\bar{t}$ such that, for every $t\ge\bar{t}$, the optimal solutions of the approximating problem also solve the original problem~\cite{rinaldi:2010}.

In our case, we are not interested in approximating~\eqref{step_funct} with a concave function, because the least squares term would make the approximating problem non-concave anyway.
So, we slightly adapt the above described approach and we approximate
the term $\displaystyle \sum_{j=2}^m\sum_{i=1}^{j-1}s\bigl(\norm{z_i-z_j}\bigr)$ with the following smooth parametric function:
\[
\gamma(z;\alpha) = \sum_{j=2}^m\sum_{i=1}^{j-1}\Bigl(1-e^{-\alpha \norm{z_i-z_j}^2}\Bigr), \quad \alpha > 0.
\]


We finally write the problem approximating~\eqref{zero-norm_model} as
\begin{equation}\label{zero-norm_model_approx}
\min_{z \in \R^{mn}} \sum_{i=1}^m\norm{x_i-z_i}^2 + \lambda\sum_{j=2}^m\sum_{i=1}^{j-1} w_{ij} \Bigl(1-e^{-\alpha \norm{z_i-z_j}^2}\Bigr).
\end{equation}
Indicating with $g(z;\alpha)$ the objective function of~\eqref{zero-norm_model_approx}, it is straightforward to verify
that~\eqref{lim_app} holds for every sequence $\bigl\{\alpha^t\bigr\}$ such that $\displaystyle \lim_{t \to \infty} \alpha^t = +\infty$.
Then, we expect that the larger $\alpha$ is, the better~\eqref{zero-norm_model_approx} approximates~\eqref{zero-norm_model}.

Finally, let us remark that our approximation does not require slack variables and feasibility constraints.

\subsection{Properties of the Approximating Problem}\label{sub:properties_approx}
In this subsection, we investigate some theoretical properties of problem~\eqref{zero-norm_model_approx}, pointing out the relations between its optimal solutions and those of~\eqref{zero-norm_model}. To this aim, we briefly recall the definition of the projection operator and we state some preliminary lemmas.
\begin{definition}
Let $C \subseteq \R^n$ be a non-empty closed convex set.
Given $x \in \R^n$, we call \textit{projection of $x$ on $C$} the unique solution $p(x)$ of the problem
\[
\min \, \{\norm{x-y} \, \colon \, y\in C\}.
\]
\end{definition}

\begin{lemma}\label{lemma:proj_properties1}
Let $C \subseteq \R^n$ be a non-empty closed convex set.
\begin{itemize}
\item For any $x \in \R^n$, $p(x)$ is the projection of $x$ on $C$ if and only if
    \begin{equation}\label{proj_dist1_a}
    (x-p(x))^T(y-p(x)) \le 0, \quad \forall y \in C.
    \end{equation}
\item For any $x,y \in \R^n$, let $p(x)$ and $p(y)$ be the projections of $x$ and $y$ on $C$, respectively. Then,
    \begin{equation}\label{proj_dist1_b}
    \norm{p(x)-p(y)} \le \norm{x-y}.
    \end{equation}
\end{itemize}
\end{lemma}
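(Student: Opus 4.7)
The plan is to prove the two bullet points separately, since both are classical facts from convex analysis, and then observe that the second follows almost directly from the first.

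For the first bullet, I would start from the definition of $p(x)$ as a minimizer of $\|x-y\|^2$ over $C$ and use convexity of $C$ to obtain a one-dimensional variational inequality. Concretely, for the ``only if'' direction, I would fix an arbitrary $y \in C$, consider the convex combination $y_\lambda = p(x) + \lambda (y - p(x)) \in C$ for $\lambda \in [0,1]$, and note that the scalar function $\lambda \mapsto \|x - y_\lambda\|^2$ must attain its minimum over $[0,1]$ at $\lambda = 0$; computing its right-derivative at $0$ yields $-2(x-p(x))^T(y-p(x)) \ge 0$, which is \eqref{proj_dist1_a}. For the ``if'' direction, I would simply expand
\[
\|x - y\|^2 = \|x - p(x)\|^2 + 2(x - p(x))^T(p(x) - y) + \|p(x) - y\|^2
\]
and observe that the middle term is nonnegative by assumption \eqref{proj_dist1_a} (with the sign flipped), while the last is nonnegative, so $\|x-y\|^2 \ge \|x-p(x)\|^2$ for every $y \in C$. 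Uniqueness follows from strict convexity of $\|\cdot\|^2$.

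For the second bullet, the strategy is to apply the characterization \eqref{proj_dist1_a} twice: once with $x$ and the test point $y := p(y)$, giving $(x-p(x))^T(p(y)-p(x)) \le 0$, and once with $y$ and the test point $x := p(x)$, giving $(y-p(y))^T(p(x)-p(y)) \le 0$. Summing the two inequalities yields $(x - y + p(y) - p(x))^T(p(y)-p(x)) \le 0$, i.e.\
\[
\|p(y)-p(x)\|^2 \le (y-x)^T(p(y)-p(x)).
\]
An application of the Cauchy--Schwarz inequality to the right-hand side, followed by division by $\|p(y)-p(x)\|$ (the case $p(x) = p(y)$ being trivial), gives \eqref{proj_dist1_b}.

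No serious obstacle is expected: both statements are textbook facts whose proofs rely only on convexity of $C$ and the Hilbert-space structure of $\R^n$. The only delicate point is the sign bookkeeping in combining the two variational inequalities for the second part, and the tacit use of uniqueness of the projection (which itself follows from strict convexity of $y \mapsto \|x-y\|^2$ and convexity of $C$).
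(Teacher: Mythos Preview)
Your argument is correct and is in fact the standard textbook proof of these two facts. The paper, however, does not give any proof of its own: it simply cites \cite{bertsekas:1999}, Proposition~2.1.3, and moves on. So there is no substantive comparison to make at the level of proof strategy; you have supplied a self-contained derivation where the paper relies on an external reference. The derivation you outline (first-order optimality along segments in $C$ for \eqref{proj_dist1_a}, then summing the two variational inequalities and applying Cauchy--Schwarz for \eqref{proj_dist1_b}) is exactly the argument one finds in Bertsekas, so in substance the two coincide.
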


\begin{proof}
See \cite{bertsekas:1999}[Proposition 2.1.3].
\end{proof}

\begin{lemma}\label{lemma:proj_properties2}
Let $C \subset \R^n$ be a non-empty closed convex set. Given $x \in C$ and $y \in \R^n \setminus C$, let $p(y)$ be the projection of $y$ on $C$. Then,
\begin{equation}\label{proj_dist2}
\norm{x-(y+\xi(p(y)-y))} < \norm{x-y}, \quad \forall \xi \in (0,1].
\end{equation}
\end{lemma}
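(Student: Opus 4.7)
The plan is to square both sides (both are nonnegative) and reduce the claim to an algebraic inequality that follows directly from the variational characterization of the projection in Lemma~\ref{lemma:proj_properties1}. Write $q = p(y)$ and, for fixed $\xi\in(0,1]$, set $z_\xi = y + \xi(q-y)$. I would expand
\[
\norm{x-z_\xi}^2 = \norm{x-y}^2 - 2\xi (x-y)^T(q-y) + \xi^2\norm{q-y}^2,
\]
so the desired strict inequality is equivalent to
\[
\xi \norm{q-y}^2 < 2(x-y)^T(q-y).
\]

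Next I would produce a lower bound on the right-hand side. Using the splitting $x-y = (x-q) + (q-y)$, I get
\[
(x-y)^T(q-y) = (x-q)^T(q-y) + \norm{q-y}^2.
\]
The first term is nonnegative: by Lemma~\ref{lemma:proj_properties1} applied to the projection of $y$, $(y-q)^T(x-q)\le 0$ for every $x\in C$, so $(x-q)^T(q-y)\ge 0$. Therefore $(x-y)^T(q-y) \ge \norm{q-y}^2$, and hence $2(x-y)^T(q-y) \ge 2\norm{q-y}^2$.

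To close the argument I just need to know that this lower bound strictly dominates $\xi\norm{q-y}^2$. Since $y\notin C$ while $q\in C$, we have $q\neq y$, hence $\norm{q-y}^2>0$, and for any $\xi\in(0,1]$ we indeed have $\xi\norm{q-y}^2 \le \norm{q-y}^2 < 2\norm{q-y}^2$. Combining with the previous bound gives $\xi\norm{q-y}^2 < 2(x-y)^T(q-y)$, which yields $\norm{x-z_\xi}^2 < \norm{x-y}^2$ and thus the claim after taking square roots. There is no real obstacle here; the only subtlety worth flagging in the write-up is why the inequality is strict, which is exactly where the hypothesis $y\in\R^n\setminus C$ is used to guarantee $\norm{q-y}>0$.
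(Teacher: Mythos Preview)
Your proof is correct and follows essentially the same approach as the paper: both square the norms, invoke the variational inequality~\eqref{proj_dist1_a} to control the cross term, and use $y\notin C$ to secure strictness. The only cosmetic difference is that the paper expands $x-y$ and $x-\tilde y$ around $p(y)$ and reads off the difference $\bigl(1-(1-\xi)^2\bigr)\norm{p(y)-y}^2 + 2\xi(x-p(y))^T(p(y)-y)$ directly as a sum of a strictly positive and a nonnegative term, whereas you expand around $y$ and pass through the intermediate bound $2(x-y)^T(q-y)\ge 2\norm{q-y}^2$; the two computations are algebraically equivalent.
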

\begin{proof}
Let $\tilde y = y+\xi(p(y)-y)$, where $\xi \in (0,1]$. We can write:
\begin{align*}
x - y        & = (x - p(y)) + (p(y) - y), \\
x - \tilde y & = (x - p(y)) + (p(y) - \tilde y) = (x - p(y)) + (1 - \xi)(p(y) - y).
\end{align*}
From the above relations, it follows that
\begin{align*}
\norm{x - y}^2        & = \norm{x-p(y)}^2 + \norm{p(y)-y}^2 + 2(x-p(y))^T(p(y)-y), \\
\norm{x - \tilde y}^2 & = \norm{x-p(y)}^2 + (1 - \xi)^2 \norm{p(y)-y}^2 + 2(1 - \xi)(x-p(y))^T(p(y)-y).
\end{align*}
Consequently,
\[
\norm{x - y}^2 - \norm{x - \tilde y}^2 = \bigl(1 - (1 - \xi)^2\bigr)\norm{p(y)-y}^2 + 2\xi (x-p(y))^T(p(y)-y).
\]
Since $y \notin C$, $\xi \in (0,1]$, and taking into account~\eqref{proj_dist1_a} of Lemma~\ref{lemma:proj_properties1},
we obtain that $\norm{x - y}^2 - \norm{x - \tilde y}^2 > 0$.
\end{proof}

\begin{lemma}\label{lemma:proj_properties3}
Let $C \subseteq \R^n$ be a non-empty closed convex set.
Given $x,y \in \R^n$, let $p(x)$ and $p(y)$ be the projections of $x$ and $y$ on $C$, respectively. Then,
\begin{equation}\label{proj_dist3}
\norm{(x+\xi(p(x)-x))-(y+\xi(p(y)-y))} \le \norm{x-y}, \quad \forall \xi \in [0,1].
\end{equation}
\end{lemma}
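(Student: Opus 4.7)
The plan is to write the two interpolated points as convex combinations and exploit the non-expansiveness of the projection from Lemma~\ref{lemma:proj_properties1}. Set $u = x + \xi(p(x)-x) = (1-\xi)x + \xi p(x)$ and $v = y + \xi(p(y)-y) = (1-\xi)y + \xi p(y)$; then a direct rearrangement gives
\[
u - v = (1-\xi)(x-y) + \xi\bigl(p(x) - p(y)\bigr),
\]
which, for $\xi\in[0,1]$, is a convex combination of the two vectors $x-y$ and $p(x)-p(y)$.

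Next, I would invoke convexity of the squared Euclidean norm to obtain
\[
\norm{u-v}^2 \le (1-\xi)\norm{x-y}^2 + \xi\norm{p(x)-p(y)}^2,
\]
and then plug in the non-expansiveness estimate \eqref{proj_dist1_b} from Lemma~\ref{lemma:proj_properties1}, namely $\norm{p(x)-p(y)}\le\norm{x-y}$, to conclude that $\norm{u-v}^2 \le \norm{x-y}^2$, which gives \eqref{proj_dist3} after taking square roots.

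There is no real obstacle here: the identity expressing $u-v$ as a convex combination is the only substantive observation, and the rest is convexity of $\|\cdot\|^2$ plus the already-proved non-expansive property of the projection. One could alternatively expand $\norm{u-v}^2$ directly and bound the cross term $(x-y)^T(p(x)-p(y))$ by Cauchy--Schwarz and \eqref{proj_dist1_b}, but that route is longer and less transparent than the convex-combination argument.
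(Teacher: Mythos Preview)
Your proof is correct and follows essentially the same approach as the paper: both arguments express the interpolated difference as a convex combination and then invoke convexity of the norm together with the non-expansiveness estimate~\eqref{proj_dist1_b}. The only cosmetic difference is that the paper applies convexity of $\omega(u,v)=\norm{u-v}$ as a function on $\R^{2n}$ (without squaring), whereas you work directly with the convex combination in $\R^n$ and use convexity of $\norm{\cdot}^2$; both routes are equally valid and equally short.
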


\begin{proof}
Let us consider the function $\omega\biggl(\begin{bmatrix}u \\ v\end{bmatrix}\biggr) = \norm{u-v}$,
where $u,v \in \R^n$.

\noindent
Since $\omega$ is convex in $\R^{2n}$, for all $\xi \in [0,1]$ we can write
\[
\begin{split}
& \norm{(x+\xi(p(x)-x))-(y+\xi(p(y)-y))} = \omega\biggl(\begin{bmatrix}x \\ y\end{bmatrix}+\xi \begin{bmatrix}p(x)-x \\ p(y)-y\end{bmatrix}\biggr) \\
& = \omega\biggl((1-\xi)\begin{bmatrix}x \\ y\end{bmatrix}+\xi \begin{bmatrix}p(x) \\ p(y)\end{bmatrix}\biggr)
  \le (1-\xi)\ \omega\biggl(\begin{bmatrix}x \\ y\end{bmatrix}\biggr) + \xi\ \omega \biggl(\begin{bmatrix}p(x) \\ p(y)\end{bmatrix}\biggr) \\
& = (1-\xi)\norm{x-y} + \xi\norm{p(x)-p(y)} \le (1-\xi)\norm{x-y} + \xi\norm{x-y} = \norm{x-y},
\end{split}
\]
where the last inequality follows from~\eqref{proj_dist1_b} of Lemma~\ref{lemma:proj_properties1}.
\end{proof}

Now, we can start analyzing some properties of problem~\eqref{zero-norm_model_approx}. First, it attains optimal solutions, since the objective function is coercive. Moreover, the next proposition claims that all the local optimal solutions of~\eqref{zero-norm_model_approx}
are contained in a compact set, which does not depend on $\lambda$ and $\alpha$.

\begin{proposition}\label{prop:local_opt_sol_zero-norm_model_approx}
Given a finite set of vectors $X = \{x_1,\dots,x_m\} \subset \R^n$, $\alpha>0$, $\lambda \ge 0$, $w_{ij}\ge0, \, j=2,\dots,m, \, i=1,\dots,j-1$, let $z^* = \begin{bmatrix}(z_1^*)^T & \dots & (z_m^*)^T\end{bmatrix}^T$ be a local optimal solution of~\eqref{zero-norm_model_approx}. Then, $z^*_1,\dots,z^*_m$ are in the convex hull of $X$.
\end{proposition}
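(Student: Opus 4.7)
The plan is to argue by contradiction: suppose $z^*$ is a local minimum of \eqref{zero-norm_model_approx} and some component $z_i^*$ lies outside $C := \conv(X)$. I will construct a small perturbation of $z^*$, still close to $z^*$, that strictly decreases the objective $g(\cdot;\alpha)$.

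The perturbation is exactly the one tailor-made by the three preliminary lemmas. Let $p(\cdot)$ denote the projection onto the closed convex set $C$ (note that $p(v)=v$ for $v \in C$), and for $\xi \in (0,1]$ define
\[
\tilde z_i = z_i^* + \xi\bigl(p(z_i^*) - z_i^*\bigr), \qquad i = 1,\dots,m.
\]
Indices $i$ with $z_i^* \in C$ are unmoved, while indices with $z_i^* \notin C$ are shifted strictly toward $C$. As $\xi \to 0^+$, $\tilde z \to z^*$, so the perturbation stays in any prescribed neighborhood of $z^*$.

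Next I would track the two terms of $g$ separately. Since every data point $x_i$ lies in $C$, Lemma~\ref{lemma:proj_properties2} applied with $x = x_i$ and $y = z_i^*$ gives $\norm{x_i - \tilde z_i} < \norm{x_i - z_i^*}$ for every index $i$ with $z_i^* \notin C$, while the contribution of the other indices is unchanged. By the assumption that at least one $z_i^* \notin C$, the least-squares term strictly decreases. For the penalty term, Lemma~\ref{lemma:proj_properties3} yields $\norm{\tilde z_i - \tilde z_j} \le \norm{z_i^* - z_j^*}$ for every pair $(i,j)$; since the scalar map $u \mapsto 1 - e^{-\alpha u^2}$ is nondecreasing on $[0,\infty)$ and the weights $w_{ij}$ are nonnegative, the penalty term does not increase. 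Combining, $g(\tilde z;\alpha) < g(z^*;\alpha)$ for every $\xi \in (0,1]$, contradicting local optimality of $z^*$.

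The only place where care is required is the penalty term, because moving several centroids at once might conceivably spread some pair apart; the joint-contraction statement of Lemma~\ref{lemma:proj_properties3} is exactly the tool that rules this out, which is why the preliminary lemmas were set up as they are. Note also that the argument uses neither $\lambda$ nor $\alpha$ quantitatively (only nonnegativity of the weights and monotonicity of $1-e^{-\alpha u^2}$), so the enclosing compact set $C$ is independent of those parameters, as claimed.
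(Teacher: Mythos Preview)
Your proof is correct and follows essentially the same route as the paper: the perturbation $\tilde z_i = z_i^* + \xi(p(z_i^*)-z_i^*)$ is exactly the paper's descent direction, Lemma~\ref{lemma:proj_properties2} gives strict decrease of the least-squares part, and Lemma~\ref{lemma:proj_properties3} together with the monotonicity of $u\mapsto 1-e^{-\alpha u^2}$ gives non-increase of the penalty part. The paper merely makes the bookkeeping a bit more explicit by naming the index set $I=\{h: z_h^*\notin\conv(X)\}$ and splitting $g$ into three pieces $g_1+g_2+g_3$, but the argument is the same.
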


\begin{proof}
Let $g(z;\alpha)$ be the objective function of problem~\eqref{zero-norm_model_approx} for any parameter \mbox{$\alpha > 0$.} Proceeding by contradiction, we assume that
$z^* = \begin{bmatrix}(z^*_1)^T & \dots & (z^*_m)^T\end{bmatrix}^T$ is a local optimal solution of~\eqref{zero-norm_model_approx} and the following index subset is non-empty:
\[
I = \bigl\{h \in \{1,\dots,m\} \colon z^*_h \notin \conv{(X)}\},
\]
where $\conv(X)$ is the convex hull of $\{x_1,\dots,x_m\}$. We assume without loss of generality that $I = \{1,\dots,\abs{I}\}$.

Any vector $z \in \R^{mn}$ can be written as $z = \begin{bmatrix} z(I)^T &  & z(N)^T \end{bmatrix}^T$,
where
\[
z(I) = \begin{bmatrix}z_1^T & \dots & z_{\abs{I}}^T\end{bmatrix}^T \qquad \text{ and } \qquad z(N) = \begin{bmatrix}z_{\abs{I}+1}^T & \dots & z_m^T\end{bmatrix}^T.
\]
So, in the following we indicate with $z^*(I)$ the vector $\begin{bmatrix}(z^*_1)^T & \dots & (z^*_{\abs{I}})^T\end{bmatrix}^T$, and with $z^*(N)$ the vector $\begin{bmatrix}(z^*_{\abs{I}+1})^T & \dots & (z^*_m)^T\end{bmatrix}^T$.

For each $i = 1,\dots,m$, we compute $p(z^*_i)$ as the projection of $z^*_i$ on $\conv{(X)}$.
Now, we define the vector $\bar d = \begin{bmatrix} (\bar d_1)^T & \dots & (\bar d_m)^T \end{bmatrix}^T \in \R^{mn}$ such that
\[
\bar d_i = p(z^*_i) - z^*_i \in \R^n, \quad i = 1,\dots,m,
\]
and we rewrite $\bar d$ as $\begin{bmatrix}\bar d(I)^T &  & \bar d(N)^T\end{bmatrix}^T$, 
where
\[
\bar d(I) = \begin{bmatrix}\bar d_{1}^{\,T} & \dots & \bar d_{\abs I}^{\,T}\end{bmatrix}^T \qquad \text{ and } \qquad \bar d(N) = \begin{bmatrix}\bar d_{\abs I+1}^{\,T} & \dots & \bar d_m^{\,T}\end{bmatrix}^T.
\]
From the definition of $I$, it follows that $\bar d \ne 0$. In particular, we have
\begin{align*}
\bar d_i \neq 0, \quad & i = 1,\dots,\abs I, \\
\bar d_i = 0, \quad & i = \abs I+1,\dots,m.
\end{align*}

We show that $\bar d$ is a descent direction for $g(z;\alpha)$ at $z^*$, namely, that there exists a scalar $\bar \xi > 0$ such that
\begin{equation}\label{proof:desc2}
g(z^* + \xi \bar d;\alpha) < g(z^*;\alpha), \quad \forall \xi \in (0,\bar \xi].
\end{equation}
To this aim, we rewrite $g(z;\alpha) = g_1\bigl(z(I)\bigr) + g_2\bigl(z(N)\bigr) + g_3(z;\alpha)$,
where
\begin{align*}
& g_1\bigl(z(I)\bigr) = \sum_{j=1}^{\abs{I}} \norm{x_j - z_j}^2, \\
& g_2\bigl(z(N)\bigr) = \sum_{j=\abs{I}+1}^m \norm{x_j - z_j}^2, \\
& g_3(z;\alpha) = \lambda\sum_{j=2}^m\sum_{i=1}^{j-1} w_{ij} \Bigl(1-e^{-\alpha \norm{z_i-z_j}^2}\Bigr).
\end{align*}
We consider $g_1\bigl(z(I)\bigr)$, $g_2\bigl(z(N)\bigr)$ and $g_3(z;\alpha)$ separately.
\begin{itemize}
\item First, we consider $g_1\bigl(z(I)\bigr)$. From Lemma~\ref{lemma:proj_properties2}, for all $j \in I$ we can write
    \[
    \norm{x_j - (z^*_j + \xi \bar d_j)}^2 < \norm{x_j-z^*_j}^2, \quad \forall \xi \in (0,1],
    \]
    and then
    \begin{equation}\label{proof:decr4}
    g_1\bigl(z^*(I) + \xi \bar d(I)\bigr) < g_1\bigl(z^*(I)\bigr), \quad \forall \xi \in (0,1].
    \end{equation}
\item Now, we consider $g_2\bigl(z(I)\bigr)$. Since $\bar d(N) = 0$, we simply have
    \begin{equation}\label{proof:decr5}
    g_2\bigl(z^*(N) + \xi \bar d(N)\bigr) = g_2\bigl(z^*(N)\bigr), \quad \forall \xi > 0.
    \end{equation}
\item Finally, we consider $g_3(z;\alpha)$. From Lemma~\ref{lemma:proj_properties3}, for all pairs $(z^*_i,z^*_j)$ we can write
    \[
    \norm{(z^*_i + \xi \bar d_i) - (z^*_j + \xi \bar d_j)}^2 \le \norm{z^*_i - z^*_j}^2, \quad \forall \xi \in (0,1],
    \]
    from which we get
    \[
    1 - e^{-\alpha\norm{(z^*_i + \xi \bar d_i) - (z^*_j + \xi \bar d_j)}^2} \le 1 - e^{-\alpha\norm{z^*_i - z^*_j}^2}, \quad \forall \xi \in (0,1],
    \]
    and then
    \begin{equation}\label{proof:decr6}
    g_3(z^* + \xi \bar d;\alpha) \le g_3(z^*;\alpha), \quad \forall \xi \in (0,1].
    \end{equation}
\end{itemize}
From~\eqref{proof:decr4}, \eqref{proof:decr5} and~\eqref{proof:decr6}, we conclude that~\eqref{proof:desc2} holds with $\bar \xi = 1$. This contradicts the fact the $z^*$ is a local optimal solution of~\eqref{zero-norm_model_approx}.
\end{proof}

In the previous subsection, we pointed out that for large values of the \mbox{parameter $\alpha$}, problem~\eqref{zero-norm_model_approx}
is a good approximation of~\eqref{zero-norm_model}.
The next theorem establishes the convergence of the global optimal solutions of problem~\eqref{zero-norm_model_approx} towards global optimal solutions
of problem~\eqref{zero-norm_model} for $\alpha \to +\infty$.

\begin{theorem}\label{th:global_convergence}
Given a finite set of vectors $X = \{x_1,\dots,x_m\} \subset \R^n$, $\lambda \ge 0$, \mbox{$w_{ij}\ge0$}, \mbox{$j=2,\dots,m,$} $\,i=1,\dots,j-1$, let $\bigl\{\alpha^t\bigr\}$ be a sequence of positive scalars such that $\alpha^{t+1} > \alpha^t$ and $\displaystyle\lim_{t\to\infty}\alpha^t = +\infty$. For any given parameter $\alpha \in \R^+$, let $g(z;\alpha)$ be the objective function of~\eqref{zero-norm_model_approx}, and $z(\alpha) = \begin{bmatrix}z_1(\alpha)^T & \dots & z_m(\alpha)^T\end{bmatrix}^T$ be a global optimal solution of~\eqref{zero-norm_model_approx}. Then,
\begin{enumerate}[(i)]
\item the sequence $\bigl\{g\bigl(z\bigl(\alpha^t\bigr);\alpha^t\bigr)\bigr\}$ converges,
\item the sequence $\bigl\{z\bigl(\alpha^t\bigr)\bigr\}$ attains limit points,
\item every limit point of $\bigl\{z\bigl(\alpha^t\bigr)\bigr\}$ is a global optimal solution of~\eqref{zero-norm_model}.
\end{enumerate}
\end{theorem}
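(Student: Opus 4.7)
The plan is to combine a monotonicity argument for the optimal values, the boundedness from Proposition~\ref{prop:local_opt_sol_zero-norm_model_approx}, and a liminf estimate along convergent subsequences of $\{z(\alpha^t)\}$. The backbone is two elementary inequalities valid for every $z \in \R^{mn}$ and $\alpha > 0$: first, $g(z;\alpha) \le \phi(z)$, because $1 - e^{-\alpha u^2} \le s(u)$ for every $u \in \R$; second, $\alpha \mapsto g(z;\alpha)$ is nondecreasing, because each factor $e^{-\alpha\norm{z_i-z_j}^2}$ is nonincreasing in $\alpha$ and the coefficients $w_{ij}$ are nonnegative. Combining the global optimality of $z(\alpha^t)$ at parameter $\alpha^t$ with the monotonicity in $\alpha$ gives
\[
g(z(\alpha^{t+1});\alpha^{t+1}) \ge g(z(\alpha^{t+1});\alpha^t) \ge g(z(\alpha^t);\alpha^t),
\]
so the optimal values form a nondecreasing sequence; by the Remark of Section~\ref{sub:the_clustering_model}, $\phi$ admits a global minimizer $z^*$, whence $g(z(\alpha^t);\alpha^t) \le g(z^*;\alpha^t) \le \phi(z^*)$ bounds the sequence from above. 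This yields (i), and the same upper bound $g(z(\alpha^t);\alpha^t) \le \phi(z^*)$ will be reused in (iii).

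For (ii), Proposition~\ref{prop:local_opt_sol_zero-norm_model_approx} confines every component $z_i(\alpha^t)$ to the compact set $\conv(X)$, hence $\{z(\alpha^t)\}$ lies in the compact product $\conv(X)^m \subset \R^{mn}$ and admits limit points. For (iii), I would fix a limit point $\bar z$ with $z(\alpha^{t_k}) \to \bar z$ and partition the index pairs into $E = \{(i,j): i<j, \, \bar z_i = \bar z_j\}$ and $N = \{(i,j): i<j, \, \bar z_i \ne \bar z_j\}$. For $(i,j) \in N$, $\norm{z_i(\alpha^{t_k}) - z_j(\alpha^{t_k})}$ tends to $\norm{\bar z_i - \bar z_j} > 0$ and $\alpha^{t_k} \to +\infty$, hence $1 - e^{-\alpha^{t_k}\norm{z_i(\alpha^{t_k})-z_j(\alpha^{t_k})}^2} \to 1$; for $(i,j) \in E$ the same quantity is merely nonnegative. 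Continuity of the least-squares part gives $\sum_i \norm{x_i - z_i(\alpha^{t_k})}^2 \to \sum_i \norm{x_i - \bar z_i}^2$, and taking liminf term by term (using $w_{ij} \ge 0$) yields
\[
\liminf_{k\to\infty} g(z(\alpha^{t_k});\alpha^{t_k}) \ge \sum_{i=1}^m \norm{x_i-\bar z_i}^2 + \lambda\sum_{(i,j)\in N} w_{ij} = \phi(\bar z).
\]
Combined with the uniform bound $g(z(\alpha^{t_k});\alpha^{t_k}) \le \phi(z^*)$ from (i), this forces $\phi(\bar z) \le \phi(z^*)$, so $\bar z$ is a global minimizer of~\eqref{zero-norm_model}.

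The main obstacle is the treatment of pairs $(i,j) \in E$: the product $\alpha^{t_k}\norm{z_i(\alpha^{t_k}) - z_j(\alpha^{t_k})}^2$ is genuinely indeterminate, because the differences $\norm{z_i(\alpha^{t_k}) - z_j(\alpha^{t_k})}$ may vanish arbitrarily slowly compared with the divergence of $\alpha^{t_k}$, so one cannot conclude that the corresponding exponentials tend to $1$ (nor to $0$). What rescues the argument is the lucky bookkeeping that $\phi(\bar z)$ assigns weight exactly $0$ to these very pairs, so the crude lower bound $1 - e^{-\alpha^{t_k}\norm{\,\cdot\,}^2} \ge 0$ is already sharp enough to recover $\phi(\bar z)$ on the right-hand side of the liminf inequality, while the contribution from the $N$-pairs is handled by pointwise convergence to $1$.
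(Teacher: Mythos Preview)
Your proof is correct. Parts~(i) and~(ii) follow exactly the paper's line (monotonicity of $g$ in $\alpha$, the bound $g(z(\alpha^t);\alpha^t)\le\phi(z^*)$, and compactness via Proposition~\ref{prop:local_opt_sol_zero-norm_model_approx}). Part~(iii), however, is argued differently. The paper proves~(iii) by contradiction: assuming $\phi(\bar z)>\phi(z^*)$, it invokes the pointwise convergence $g(\bar z;\alpha^t)\to\phi(\bar z)$ at the \emph{fixed} point $\bar z$, then uses continuity of $g(\cdot;\alpha^{\bar t})$ to propagate the strict inequality $g(\cdot;\alpha^{\bar t})>\phi(z^*)$ to a ball around $\bar z$, and finally combines this with monotonicity in $\alpha$ to contradict the optimality of $z(\alpha^t)$ once it enters that ball. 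Your argument is instead a direct liminf computation along the moving sequence $z(\alpha^{t_k})$: splitting the index pairs into $E$ and $N$, you drop the $E$-terms (nonnegative) and observe that the remaining terms converge, giving $\liminf_k g(z(\alpha^{t_k});\alpha^{t_k})\ge\phi(\bar z)$ outright. Your route is more elementary---no contradiction, no neighborhood argument, no appeal to continuity of $g$ in $z$---and it makes transparent exactly why the indeterminate $E$-pairs cause no trouble. The paper's route, on the other hand, is a more portable template: it would apply verbatim to any family $g(\cdot;\alpha)$ that is continuous in $z$, nondecreasing in $\alpha$, and converges pointwise to $\phi$, without dissecting the specific structure of the penalty.
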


\begin{proof}
Let $\phi(z)$ be the objective function of problem~\eqref{zero-norm_model}. Moreover, we indicate with \mbox{$z^* = \begin{bmatrix}(z^*_1)^T & \dots & (z^*_m)^T\end{bmatrix}^T$} a global optimal solution of problem~\eqref{zero-norm_model}.

From Proposition~\ref{prop:local_opt_sol_zero-norm_model_approx}, it follows that the sequence $\{z(\alpha^t)\}$ remains in a compact set,
thus it attains limit points, which proves (ii).

Now we show that, for all $t = 1,2,\dots$, the following relations hold:
\begin{align}
\label{major}&g\bigl(z;\alpha^t\bigr) \le g\bigl(z;\alpha^{t+1}\bigr) \le \phi(z), \quad \forall z \in \R^{mn}, \\
\label{lim_above}&g\bigl(z\bigl(\alpha^t\bigr);\alpha^t\bigr) \le g\bigl(z^*;\alpha^t\bigr) \le \phi(z^*), \\
\label{non_incr}&g\bigl(z\bigl(\alpha^t\bigr);\alpha^t\bigr) \le g\bigl(z\bigl(\alpha^{t+1}\bigr);\alpha^{t+1}\bigr).
\end{align}
Relation~\eqref{major} follows from the fact that $\alpha^{t+1} > \alpha^t > 0$. In fact, for every index pair $(i,j)$, we have
\[
1 - e^{-\alpha^t\norm{z_i-z_j}^2} \le 1 - e^{-\alpha^{t+1}\norm{z_i-z_j}^2} \le s\bigl(\norm{z_i-z_j}\bigr).
\]
The first inequality of~\eqref{lim_above} follows from the fact that $z\bigl(\alpha^t\bigr)$ minimizes $g\bigl(z;\alpha^t\bigr)$ with respect to $z$. The second inequality of~\eqref{lim_above} follows from~\eqref{major}. To prove~\eqref{non_incr}, assume by contradiction that it does not hold. Then there exists an index $t$ such that $g\bigl(z\bigl(\alpha^{t+1}\bigr);\alpha^{t+1}\bigr) < g\bigl(z\bigl(\alpha^t\bigr);\alpha^t\bigr)$. Using~\eqref{major}, we can write
\[
g\bigl(z\bigl(\alpha^{t+1}\bigr);\alpha^t\bigr) \le g\bigl(z\bigl(\alpha^{t+1}\bigr);\alpha^{t+1}\bigr) < g\bigl(z\bigl(\alpha^t\bigr);\alpha^t\bigr),
\]
which contradicts the fact that $z\bigl(\alpha^t\bigr)$ minimizes $g\bigl(z;\alpha^t\bigr)$ with respect to $z$. Then, \eqref{non_incr} must hold.

From~\eqref{lim_above} and~\eqref{non_incr}, it follows that the sequence $\bigl\{g\bigl(z\bigl(\alpha^t\bigr);\alpha^t\bigr)\bigr\}$ is monotonically non-decreasing and bounded from above. Thus it converges, proving (i).

Now, let $\bar{z}$ be a limit point of $\big\{z\bigl(\alpha^t\bigr)\bigr\}$, that is, there exists a subsequence $\big\{z\bigl(\alpha^t\bigr)\bigr\}_{\mathcal T}$ such that
\begin{equation}\label{acc_point}
\lim_{t\to\infty, \, t\in\mathcal{T}} z\bigl(\alpha^t\bigr) = \bar z.
\end{equation}
To prove (iii), we assume by contradiction that $\bar{z}$ is not a global optimal solution of~\eqref{zero-norm_model}. Then, there exists $\epsilon>0$ such that
\begin{equation}\label{opt_contr}
\phi(z^*) \le \phi(\bar z) - \epsilon.
\end{equation}
Since $\displaystyle \lim_{t\to\infty} g\bigl(z;\alpha^t\bigr) = \phi(z)$ for all $z \in \R^{mn}$, there exists an index $\bar{t}$ such that
\begin{equation}\label{major2}
\abs*{\phi(\bar z) - g\bigl(\bar z;\alpha^t\bigr)} < \epsilon, \qquad \forall t \ge \bar t.
\end{equation}
Using~\eqref{opt_contr} and~\eqref{major2}, we have that
$
g\bigl(\bar z;\alpha^t\bigr) > \phi(\bar z) - \epsilon \ge \phi(z^*),
$
for all $t \ge \bar t$.
Since $g\bigl(z;\alpha^t\bigr)$ is continuous with respect to $z$, there exists $\bar \rho > 0$ such that
\begin{equation}\label{major3}
g\Bigl(z;\alpha^{\bar t}\Bigr) > \phi(z^*), \qquad \forall z \in \mathcal{B}(\bar z,\bar \rho).
\end{equation}
From~\eqref{major}, \eqref{lim_above} and~\eqref{major3}, we can write
\begin{equation}\label{major4}
g\bigl(z;\alpha^t\bigr) \ge g\Bigl(z;\alpha^{\bar t}\Bigr) > \phi(z^*) \ge g\bigl(z^*;\alpha^t\bigr), \qquad \forall z \in \mathcal{B}(\bar{z},\bar{\rho}), \quad \forall t \ge \bar t.
\end{equation}
From~\eqref{acc_point}, there exists an index $\hat{t} \ge \bar t$ such that
\begin{equation}\label{neigh}
z\bigl(\alpha^t\bigr) \in \mathcal{B}(\bar z,\bar \rho), \qquad \forall t \ge \hat t \ge \bar t, \quad t \in \mathcal{T}.
\end{equation}
Finally, from~\eqref{major4} and~\eqref{neigh} we get
\[
g\bigl(z\bigr(\alpha^t\bigr);\alpha^t\bigr) > g\bigl(z^*;\alpha^t\bigr), \qquad \forall t \ge \hat{t}, \quad t \in \mathcal{T},
\]
which contradicts the fact that $z\bigl(\alpha^t\bigr)$ minimizes $g\bigl(z;\alpha^t\bigr)$ with respect to $z$ for sufficiently \mbox{large $t$}. This proves (iii).
\end{proof}

\section{The Data Filtering Method}\label{sec:filt_method}
Assuming that a fixed number of clusters is required,
in this section we present a data filtering strategy that combines model~\eqref{zero-norm_model_approx} with a technique to select a suitable value of the penalty parameter $\lambda$.

In particular, let $X = \{x_1,\dots,x_m\} \subset \R^n$ be a finite set of vectors and assume that $X$ must be partitioned into $k$ clusters, with $k$ fixed.
Let $\mathcal A$ be a generic clustering algorithm.
Our goal is to filter data to improve the performances of $\mathcal A$.

As discussed above, for any $\lambda$, an approximate solution $z^*_1,\dots,z^*_m$ of~\eqref{zero-norm_model} can be computed by solving~\eqref{zero-norm_model_approx} with suitable values of $\alpha$.
We observe that, independently of the obtained number of clusters, the points $z^*_1,\dots,z^*_m$ provide important information, because they are grouped on the basis of local densities of the samples $x_1,\dots,x_m$.
Therefore, each $z^*_i$ is representative of the behavior of $X$ in the neighborhood of $x_i$.
Consequently, after solving~\eqref{zero-norm_model_approx}, a partition $P$ of $X$ can be computed by applying $\mathcal A$ to the points $z^*_1,\dots,z^*_m$, instead of $x_1,\dots,x_m$.
Furthermore, some centroids should coincide, and then the geometry of $z^*_1,\dots,z^*_m$ is expected to be more regular than that of $x_1,\dots,x_m$. This can make the vectors $z^*_1,\dots,z^*_m$ easier to be clustered than $x_1,\dots,x_m$.

In other words, the (approximate) $\ell_0$-norm penalty model can be seen as a filtering method that maps each sample $x_i$ by a vector $z^*_i$
that is representative of the local density of $X$ in the neighborhood of $x_i$.

Naturally, the solutions of problem~\eqref{zero-norm_model_approx} are sensitive to the value of $\lambda$, that is,
different filters can be obtained by varying that parameter.
So, a strategy to choose a proper value of $\lambda$ must be introduced.

To this aim, we also have to take into account that the most suitable way to filter data can depend on the algorithm we apply later.
This is why we introduce a criterion to evaluate the partitions produced by $\mathcal A$ after filtering data
with a certain $\lambda$. The idea is to try different values of $\lambda$ and finally choose the best one in terms of our criterion,
as usually done in cross validation.
The whole filtering method is then summarized in \textbf{Algorithm~1}:

\smallskip
\begin{enumerate}[1.]
\setlength\itemsep{0.1em}
\item Given $\{x_1,\dots,x_m\}$, $1\le k \le m$, $\{\lambda_1,\dots,\lambda_N\} \in \R^+_0$, and an algorithm~$\mathcal A$
\item For $t = 1,\dots,N$
\item\label{alg2:zero_norm_step} \hspace*{0.5truecm} Set $\lambda = \lambda_t$ and compute $\{z^*_1,\dots,z^*_m\}$ by solving problem~\eqref{zero-norm_model_approx}
\item\label{alg2:clustering_step} \hspace*{0.5truecm} Compute a partition $\tilde{P}^t = \{\tilde{C}_1,\dots,\tilde{C}_k\}$ of $\{z^*_1,\dots,z^*_m\}$ by algorithm~$\mathcal A$
\item \hspace*{0.5truecm} Let $P^t = \{C_1,\dots,C_k\}$ such that $x_u \in C_i$ if $z^*_u \in \tilde{C}_i$, \mbox{$u=1,\dots,m, \, i=1,\dots,k$}
\item\label{alg2:eval_step} \hspace*{0.5truecm} Evaluate $P^t$ by assigning it a value $c(P^t)$
\item End for
\item Select $P^*$ as the best among $\{P^1,\dots,P^N\}$ in terms of $c(P^t)$
\end{enumerate}
\smallskip

We remark that the above strategy allows adapting the filtering to the specific clustering algorithm~$\mathcal A$.
Namely, different filters can be obtained for the same data set, according to the clustering algorithm to apply later.

We conclude this section by explaining how we compute $c(P^t)$ at Step~\ref{alg2:eval_step}.
Although the most proper way to evaluate a partition can strongly depend on the features of the specific data set (not known a priori),
the criterion we propose comes from a natural interpretation of clusters as subsets of similar points, where similarity is measured by the distance.
Basically, we encourage partitions with small distances within clusters and large distances between clusters.

More formally, given a partition $P^t = \{C_1,\dots,C_k\}$, where $C_1,\dots,C_k$ are disjoint subsets of $X$, we compute $c(P^t)$ at Step~\ref{alg2:eval_step} as
\[
c(P^t) = \frac{1}{d_{b}} \sum_{i=1}^k \frac{\displaystyle d_w(i)}{\displaystyle n_p(i)},
\]
where $d_w(i)$ is the sum of the distances within cluster $C_i$,
$n_p(i)$ is the number of pairs of points belonging to cluster $C_i$
and $d_b$ is the sum of the distances between all the pairs of points belonging to different clusters.

In order to operate in high-dimensional spaces, we also use kernel functions to compute distances between points (see~\cite{scholkopf:2002} for definition and properties of kernel functions), so that the distance between two vectors $x_u, x_v \in R^n$ can be computed as
\[
K(x_u,x_u) - 2K(x_u,x_v) + K(x_v,x_v),
\]
where $K(\cdot,\cdot)$ is the chosen kernel function.
In particular, in our simulations we used a Gaussian kernel.
Given $x_u,x_v \in \R^n$, the Gaussian kernel is defined as
\begin{equation}\label{gauss_kernel}
K(x_u,x_v) = e^{-\gamma \norm{x_u-x_v}^2}, \quad x_u,x_v \in \R^n.
\end{equation}
In our experiments, we set $\gamma = 0.1$.

\section{Numerical Experience}\label{sec:numerical_results}
In this section, we report our numerical experience.
In Subsection~\ref{subsec:exp}, we describe how we set up the experiments.
In Subsection~\ref{subsec:solv_prob}, we show how we solved problem~\eqref{zero-norm_model_approx}.
Finally, in Subsection~\ref{subsec:res}, we report and discuss the numerical results.

\subsection{Experimental Set-Up}\label{subsec:exp}
We compare the performances of three well known clustering methods when they are applied to the original data
and when they are applied to the data filtered by Algorithm~$1$.
The considered methods are the following:
\begin{itemize}
\item Single-Linkage (SL) method, which is a hierarchical clustering algorithm that iteratively merges the two clusters containing the closest pair of points
    (see~\cite{gan:2007} for further details);
\item Expectation-Maximization for Gaussian mixture Models (EMGM), which tries to estimate the parameters of the probability density functions generating the samples (see~\cite{dempster:1977,mclachlan:2004} for further details);
\item Kernel K-Means (KKM), which is an extension of the standard k-means method exploiting kernel functions to compute distances
    (see \cite{scholkopf:1998,girolami:2002,dhillon:2004} for further details).
\end{itemize}

Since KKM and EMGM aim to solve non-convex optimization problems, they were executed $1000$ times, choosing randomly the starting parameters, and finally taking the solution providing the best objective value. In particular, to run KKM, we used a Gaussian kernel, defined as in~\eqref{gauss_kernel},
with $\gamma = 0.1$.

In addition, we also tried to filter data by using different techniques. First, to show the effect of the $\ell_0$-norm penalty, we tested a different regularization.
In particular, we considered the squared \mbox{$\ell_2$-norm} regularization (also known as ridge regularization), which typically does not induce sparsity.
In this case, we applied Algorithm~$1$ replacing problem~\eqref{zero-norm_model_approx} at Step~$3$ with the following problem:
\begin{equation}\label{sq_l2_norm_reg_prob}
\min_{z \in \R^{mn}} \sum_{i=1}^m\norm{x_i-z_i}^2 + \lambda\sum_{j=2}^m\sum_{i=1}^{j-1} w_{ij} \norm{z_i-z_j}^2.
\end{equation}
Both for the filter obtained with the $\ell_0$-norm regularization and the one obtained with the squared $\ell_2$-norm regularization,
$150$ increasing values of $\lambda$ were used, chosen such that $\lambda_1=0$ and $\lambda_{150}$ provides a solution of problem~\eqref{zero-norm_model_approx}
(respectively, problem~\eqref{sq_l2_norm_reg_prob}) that collapses to a single centroid.
For both problem~\eqref{zero-norm_model_approx} and problem~\eqref{sq_l2_norm_reg_prob}, the weight parameters $w_{ij}$ were set as
\[
w_{ij} = e^{-0.1 \norm{x_i-x_j}^2}, \quad j=2,\ldots,m, \quad i=1,\ldots,j-1.
\]

A further filtering technique we tried in our experiments is running k-means~\cite{macqueen:1967}
with a predetermined number of clusters $\bar k$, in order to represent the original data by the centroids.
Namely, after applying k-means (which was repeated $1000$ times), the clustering algorithms SL, EMGM and KKM were applied to the resulting $\bar k$ centroids.
We tested this filter with different values of $\bar k$. In particular, we tried $\bar k = 5\,k$, $\bar k = 10\,k$ and $\bar k = 20\,k$,
where $k$ is the true number of clusters of a given data set.
The rationale behind this choice is to have a number of representative centroids larger than the number of features, for every data set.

The experiments were conducted on some synthetic and real data sets, covering different scenarios\footnote{All data were scaled in $[-1,1]$.}:
\begin{description}
\item[Case (i):] two spherical clusters in two dimensions, with equal volumes and the same cardinality.
   The first cluster has $50$ points, generated from a bivariate Normal distribution with mean vector $\begin{pmatrix} 0 & 0 \end{pmatrix}^T$
   and covariance matrix $0.33^2I$. The second cluster has $50$ points, drawn from a bivariate Normal distribution
   $N\Bigl(\begin{pmatrix} 1 & 1 \end{pmatrix}^T, 0.33^2I\Bigr)$.
\item[Case (ii):] two elongated clusters in two dimensions, with different cardinalities.
   The first cluster has $500$ points, generated from a bivariate Normal distribution with mean vector $\begin{pmatrix} 0 & 5 \end{pmatrix}^T$
   and covariance matrix $\begin{pmatrix} 0.05 & 0 \\ 0 & 5 \end{pmatrix}$. The second cluster has $50$ points, drawn from a bivariate Normal distribution
   $N\biggl(\begin{pmatrix} 2.5 \\ 0 \end{pmatrix}, \begin{pmatrix} 0.3 & 0 \\ 0 & 0.05 \end{pmatrix}\biggr)$.
\item[Case (iii):] two spherical clusters in two dimensions, with different volumes and cardinalities.
   The first cluster has $500$ points, generated from a bivariate Normal distribution $N\Bigl(\begin{pmatrix} 0 & 0 \end{pmatrix}^T, 4I\Bigr)$.
   The second cluster has $50$ points, generated from a bivariate Normal distribution $N\Bigl(\begin{pmatrix} 7 & 0 \end{pmatrix}^T, 0.5I\Bigr)$.
\item[Case (iv):] four clusters in three dimensions. The centers $\mu_1, \mu_2, \mu_3, \mu_4 \in \R^3$ were drawn from a multivariate distribution
    $N\Bigl(\begin{pmatrix} 0 & 0 & 0 \end{pmatrix}^T, 5I\Bigr)$. When generating the centers, if two of them had an Euclidean distance smaller than $1$,
    the simulation was aborted and then started again. After fixing the centers, the number of elements for each cluster was randomly chosen in the range $[10,100]$.
    Finally, for each cluster $i$, the points were generated from a multivariate distribution $N(\mu_i,I)$.
    This is similar to case IV in \cite{pan:2013}, and scenario~(c) in \cite{tibshirani:2001}, but here clusters are more imbalanced.
\item[Case (v):] the Ecoli data set from the UCI repository \cite{lichman:2013}.
    There are $336$ samples characterized by $7$ features and divided into 8 clusters, which contain $143$, $77$, $52$, $35$, $20$, $5$, $2$ and $2$ elements, respectively.
\item[Case (vi):] the Fisher's Iris data from the UCI repository \cite{lichman:2013}.
    The points are in four dimensions and divided into $3$ clusters of $50$ elements each.
    The second and the third cluster are partially overlapped, whereas the first cluster is linearly separable from the other two.
\item[Case (vii):] the wine data set from the UCI repository \cite{lichman:2013}, with $178$ samples of $3$ kinds of wine.
    The clusters contain $59$, $71$ and $48$ elements, respectively, and each sample is characterized by $13$ features.
\item[Case (viii):] the Wisconsin breast cancer data set from the UCI repository \cite{lichman:2013,mangasarian:1990}.
    There are $683$ samples of $9$ features each\footnote{Originally, there were $699$ samples, but $16$ of them had missing values and were removed.},
    divided in two groups: $444$ benign and $239$ malignant.
\end{description}

The partitions are finally evaluated by the Adjusted Rand Index (ARI)~\cite{hubert:1985}, which takes $1$ as maximum value (ARI can also assume negative values).

\subsection{Solving the Approximating Problem}\label{subsec:solv_prob}
Taking into account Theorem~\ref{th:global_convergence}, solving~\eqref{zero-norm_model_approx} with large values of $\alpha$
can be a practical solution to get good approximations of the optimal solutions of~\eqref{zero-norm_model}.
Theorem~\ref{th:global_convergence} would also require to compute a global solution of the approximating problem,
so, a global algorithm should be used, to be in line with the theory.
But global algorithms are in general computationally expensive, especially when dealing with large-scale problems, as in our case.
Moreover, since model~\eqref{zero-norm_model_approx} is employed to filter data, then (i) solving the problem should not be too expensive, and
(ii) it could be sufficient to compute ``good'' solutions of problem~\eqref{zero-norm_model_approx}, even if not global optima.
Thus, it can be reasonable to employ a local algorithm that, on the one hand, can provide non-global minimizers,
but, on the other hand, is cheaper than a global method.

After all, many clustering models are formulated as non-convex problems, and several algorithms that are widely used in practice
are based on local strategies (e.g., the aforementioned KKM and EMGM).
Anyway, defining efficient global methods to solve~\eqref{zero-norm_model_approx} can be a challenging task for future research.

For the above reasons, we solved problem~\eqref{zero-norm_model_approx} by employing a non-monotone version of the truncated-Newton method
which exploits negative curvature directions (so, it is well suited for non-convex problems), proposed in~\cite{fasano:2009}.

Finally, another computational issue is that problem~\eqref{zero-norm_model} becomes ill-conditioned when $\alpha$ and $\lambda$ get large.
Then, we employed a warm-start strategy, gradually increasing $\alpha$ up to a prefixed value
(this approach was also proposed in~\cite{bradley:1998}, but not attempted in practice).
In particular, starting with $\alpha^t = 1$, $t=1$, we employed the following updating rule:
$\alpha^{t+1} = \min \bigl\{10^3, \bigl(1+e^{-0.07t}\bigr)\alpha^t\bigr\}$, $t = t+1$, stopping the algorithm when $\alpha^t$ reaches $10^3$.
For every $\alpha^t$, we solved the problem with a growing precision, terminating the minimization when the sup-norm of the gradient of the objective function
was less than or equal to $\epsilon^t = \max\{10^{-5},10^{-2}/\alpha^t\}$.

\subsection{Results}\label{subsec:res}
The final results are summarized in Table~\ref{tab:res}.
The filter based on the $\ell_0$-norm regularization and the one based on the squared $\ell_2$-norm regularization are indicated as
\mbox{\textit{$\ell_0$ filter}} and \textit{ridge filter}, respectively.
The filter obtained by k-means is denoting with \textit{KM filter} and the number of clusters used is given within brackets.

For the ridge filter, problem~\eqref{sq_l2_norm_reg_prob} was solved by employing the truncated-Newton method reported in~\cite{fasano:2009},
terminating the algorithm when the sup-norm of the gradient of the objective function was less than or equal to $10^{-5}$.

All computations were run on an Intel(R) Core(TM) i7-3770 CPU 3.40 GHz and the codes were implemented in Fortran~90.

{\setlength{\tabcolsep}{0.25em}
\begin{table}
\centering
\caption{Comparison between the values of the Adjusted Rand Index obtained by applying Single Linkage~(SL),
Expectation-Maximization for Gaussian mixture Models (EMGM) and  Kernel K-Means (KKM) to the original data
and to the filtered data.
Three different filters are considered: the $\ell_0$ filter is the one reported in Algorithm~$1$,
the ridge filter differs from the previous one in that problem~\eqref{zero-norm_model_approx} is replaced
with problem~\eqref{sq_l2_norm_reg_prob} at Step~$3$ of Algorithm~$1$,
and KM filter is obtained by applying k-means with a prefixed number of clusters $\bar k$
(which is indicated within brackets, where $k$ is the true number of clusters).}
{\begin{tabular}{c c c c c c c c c}
\toprule
\multirow{2}*{{\bf Method}} & \multicolumn{8}{c}{{\bf Dataset}}                                             \\
                            & (i)     & (ii)    & (iii)   & (iv)    & (v)     & (vi)    & (vii)   & (viii)  \\
\midrule
SL                          &  0.0000 &  1.0000 & -0.0032 &  0.0057 &  0.0399 &  0.5584 & -0.0038 &  0.0025 \\
$\ell_0$ filter + SL        &  1.0000 &  1.0000 &  1.0000 &  0.4022 &  0.4155 &  0.5657 & -0.0068 &  0.8685 \\
ridge  filter + SL          &  0.0008 &  1.0000 & -0.0032 &  0.0057 &  0.0399 &  0.5584 & -0.0038 &  0.0025 \\
KM filter (5k) + SL         &  1.0000 &  1.0000 &  0.9869 &  0.2429 &  0.0520 &  0.5621 & -0.0107 &  0.0670 \\
KM filter (10k) + SL        &  0.0000 &  1.0000 &  0.9869 &  0.2157 &  0.0482 &  0.5638 & -0.0003 &  0.0073 \\
KM filter (20k) + SL        &  0.0000 &  1.0000 & -0.0063 &  0.0020 &  0.0399 &  0.5584 & -0.0068 &  0.0101 \\
\midrule
EMGM                        &  0.9600 &  1.0000 &  1.0000 &  0.5930 &  0.5843 &  0.4414 &  0.4778 &  0.5547 \\
$\ell_0$ filter + EMGM      &  1.0000 &  1.0000 &  1.0000 &  0.5697 &  0.6752 &  0.5657 &  0.7032 &  0.8798 \\
ridge filter + EMGM         &  0.9600 &  1.0000 &  1.0000 &  0.5841 &  0.5768 &  0.9039 &  0.8154 &  0.8798 \\
KM filter (5k) + EMGM       &  0.0173 &  1.0000 &  0.6496 &  0.6776 &  0.7594 &  0.5676 &  0.6585 &  0.5301 \\
KM filter (10k) + EMGM      &  1.0000 &  1.0000 &  0.6200 &  0.4577 &  0.2890 &  0.4531 &  0.5303 &  0.7929 \\
KM filter (20k) + EMGM      &  0.8448 &  1.0000 &  0.9869 &  0.6106 &  0.2831 &  0.5399 &  0.3909 &  0.0785 \\
\midrule
KKM                         &  1.0000 &  0.9741 &  0.3977 &  0.4894 &  0.4538 &  0.7163 &  0.8992 &  0.8686 \\
$\ell_0$ filter + KKM       &  1.0000 &  1.0000 &  1.0000 &  0.6865 &  0.6977 &  0.7445 &  0.8820 &  0.8742 \\
ridge filter + KKM          &  1.0000 &  0.9741 &  0.3977 &  0.4894 &  0.4730 &  0.7302 &  0.8992 &  0.8686 \\
KM filter (5k) + KKM        &  1.0000 &  1.0000 &  0.3888 &  0.4803 &  0.4551 &  0.6537 &  0.7857 &  0.8031 \\
KM filter (10k) + KKM       &  1.0000 &  0.9491 &  0.0209 &  0.4407 &  0.5527 &  0.7060 &  0.8369 &  0.7870 \\
KM filter (20k) + KKM       &  1.0000 &  0.9615 &  0.1622 &  0.4837 &  0.4929 &  0.7455 &  0.8686 &  0.8300 \\
\bottomrule
\end{tabular}}
\label{tab:res}
\end{table}
}

First, let us discuss the results achieved by the $\ell_0$ filter.
Overall, the performances of the considered clustering methods improve by using this data filtering process.

In particular, in six data sets, the results obtained by SL are unsatisfactory by applying the algorithm to the original data,
whereas performances remarkably increase when data are filtered.
Only for the wine data set (case (vii)), the filtering does not lead to better results.

As regards EMGM, the data filtering strategy allows to improve the performances on all the real data sets (case (v)--(viii)).
Only for case (iv), better partitions are obtained by applying the algorithm to the original data.

Also for KKM, the best partitions are those computed on the filtered data, except for case (vii)
(and excluding case (i), where the right clusters are recognized also without filtering).
A significant result is obtained for case (iii), where the clusters to detect have remarkably different volumes.
This is known to be a hard case for centroid-based methods, but that issue has been overcome by the filtering strategy.

For what concerns the computational time, we plot in Figure~\ref{fig:cpu_time} the CPU time (in seconds) needed
to solve problem~\eqref{zero-norm_model_approx} versus the value of the penalty parameter $\lambda$.

\begin{figure}[h]
\centering
\includegraphics[trim = 1.2cm 0.2cm 1.1cm 0cm, clip]{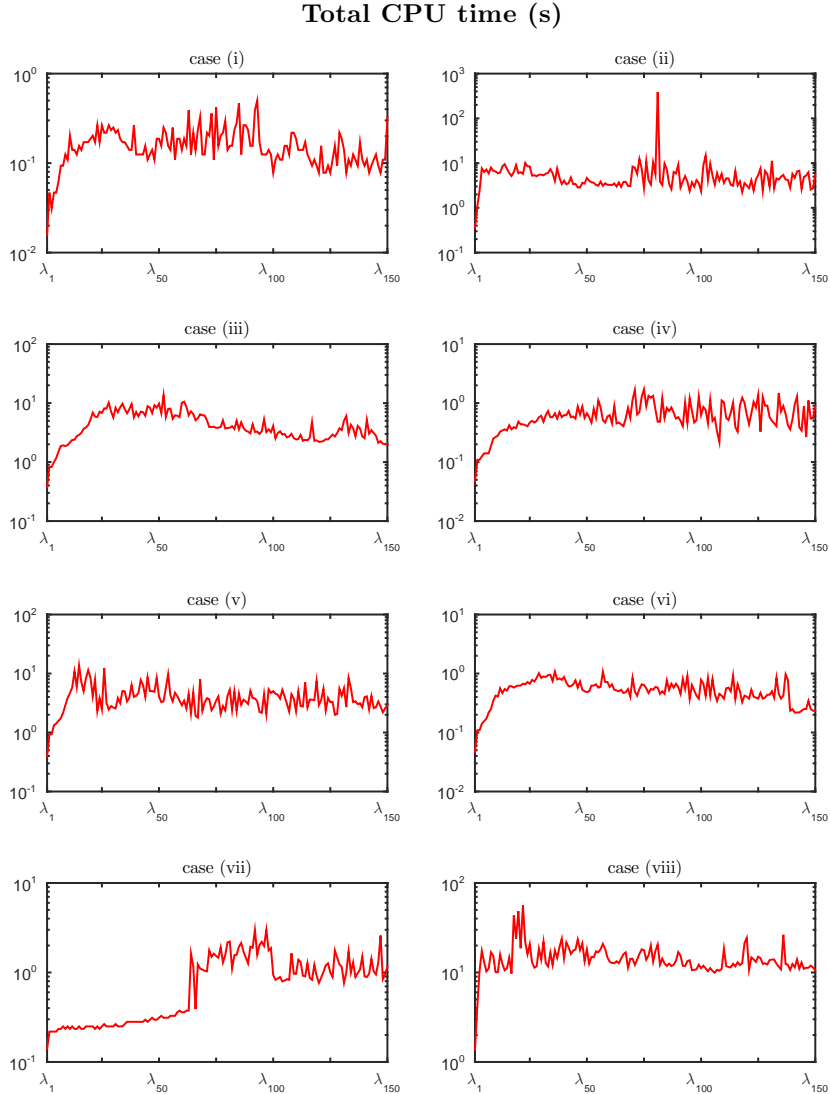}
\caption{CPU time in seconds needed to solve problem~\eqref{zero-norm_model_approx} versus the value of $\lambda$.
The $y$ axis is in logarithmic scale.}
\label{fig:cpu_time}
\end{figure}

We observe that each minimization required less than $3$ seconds for case~(i), (iv), (vi) and~(vii).
For case~(ii), (iii) and~(v), every minimizations took less than $15$ seconds, except for
a single value of $\lambda$ in case~(ii), which required $385$ seconds.

As regards the largest data set considered in the experiments, i.e., case~(viii),
the minimizations took between $40$ and $60$ seconds for three values of $\lambda$.
For the remaining values of $\lambda$, every minimization required less than $30$ seconds.
Overall, the average time needed to solve problem~\eqref{zero-norm_model_approx} is about $15$ seconds.

Recalling that we solved~\eqref{zero-norm_model_approx} with a warm-start strategy
(by employing increasing values of $\alpha$ and solving the problem with a growing precision),
it is also interesting to analyze the amount of time needed in the minimization procedure for every value
of $\alpha$. We report these times (in seconds) in Figure~\ref{fig:cpu_time_alpha}.
In particular, for every considered~$\alpha$, in Figure~\ref{fig:cpu_time_alpha} is depicted the average time
over the $150$ considered values of the penalty parameter $\lambda$, needed to solve~\eqref{zero-norm_model_approx}
with the related precision.
In almost all data sets, the computational time increases when $\alpha$ becomes large, as expected.
Only for case~(viii), we have that small values of $\alpha$ required more time.
However, the computational time needed to solve~\eqref{zero-norm_model_approx} remains, on average, below $5$ seconds for
every considered $\alpha$.

\begin{figure}[h!]
\centering
\includegraphics[trim = 1.2cm 0.15cm 1.1cm 0cm, clip]{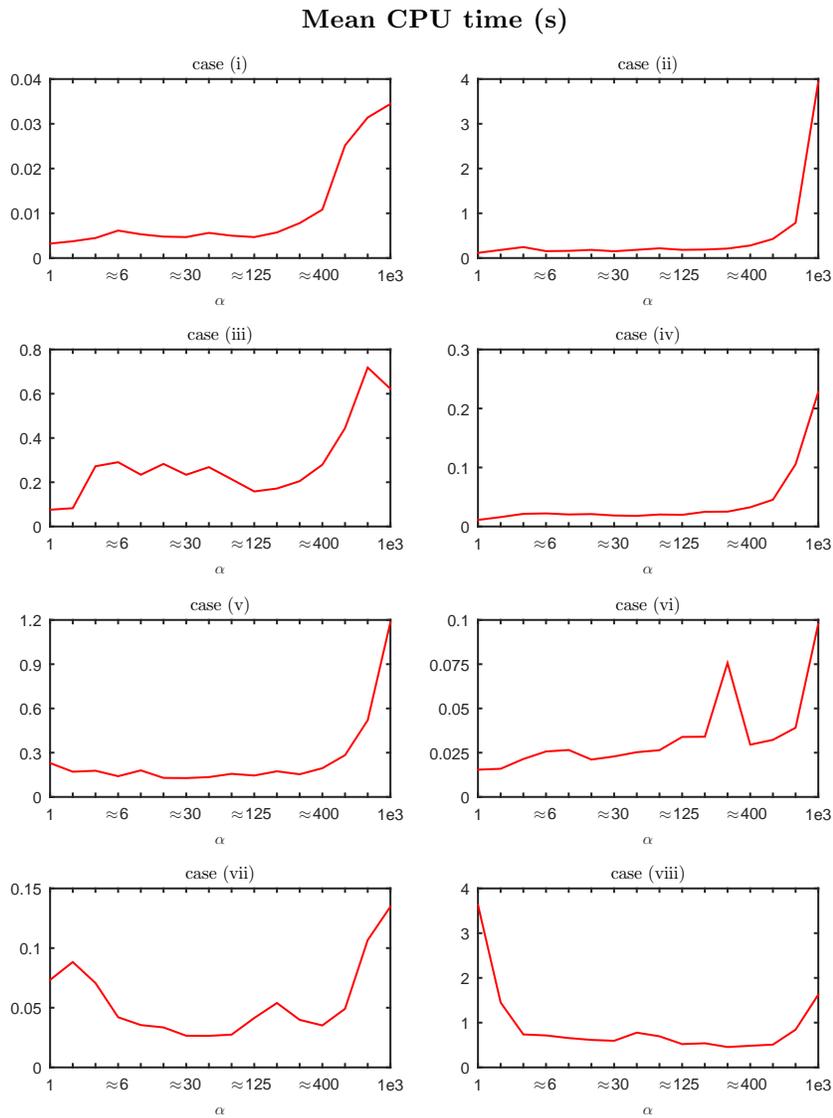}
\caption{CPU time in seconds needed to solve problem~\eqref{zero-norm_model_approx} versus the value of $\alpha$.
In each plot, the computational time is averaged over the $150$ considered values of the penalty parameter $\lambda$.}
\label{fig:cpu_time_alpha}
\end{figure}

Now, we discuss the results obtained by applying the ridge filter.
On the one hand, a clear advantage of using this filter is the low computational time needed to solve the optimization problem.
In particular, less than $0.25$ seconds were required to solve~\eqref{sq_l2_norm_reg_prob}, for every data set and for every value of $\lambda$.
Furthermore, problem~\eqref{sq_l2_norm_reg_prob} is smooth and convex, and then a global optimal solution can be computed by a local algorithm.

On the other hand, the numerical results seem worse than those achieved by the $\ell_0$-norm regularization.

In particular, the ridge filter has essentially no effect on SL.
Similarly, it does not provide relevant effects on KKM either.
Looking at the results more in detail, we also observe that this filter
is not able to improve the performance of KKM for case~(iii), which is a known problematic data set for centroid-based
methods, as discussed above.

For what concerns EMGM, the partitions obtained by employing the ridge filter are better than those computed
on the raw data for case~(vi), (vii) and~(viii).
In particular, very good results are achieved on the Iris data set.
In comparison with the $\ell_0$-norm regularization, we observe that the ridge regularization provides better results
for case~(iv) (even though they are still worse than those obtained on the raw data), case~(vi) and case~(vii),
whereas the $\ell_0$-norm regularization provides better results for case~(i), even if slightly, and case~(v).

Now, let us discuss the results achieved by KM filter.
In terms of wins, for SL and EMGM the best partitions are those obtained by running k-means with a number of clusters $\bar k$ equal to $5 \, k$,
while $\bar k = 20 \, k$ seems the best choice for KKM.
Overall, KM filter seems to perform worse than the $\ell_0$ filter, but it is much faster
(each run of the k-means algorithm took less than $0.1$ seconds, for every considered data set).

Summarizing, the $\ell_0$-norm regularization based filter seems able to benefit different clustering algorithms and
it seems more flexible than the ridge regularization based filter. Moreover, it produced better results than the k-means based filter
(for the considered choices of number of clusters $\bar k$).
From a computational point of view, both the ridge filter and the KM filter turn out to be more efficient;
however, also the computational time needed by the $\ell_0$ filter remains, on average,
below an acceptable threshold, for all the considered data sets.

Finally, let us spend some words on the applicability of the proposed approach for large data sets.
In our experiments, we were able to solve problem~\eqref{zero-norm_model_approx} efficiently by employing a Newton-type method.
We noted that this choice is effective when the problem dimensions
(i.e., the product of the number of samples and the number of features) do not exceed $10^4-10^5$.
To cope with larger problems, we think that the optimization procedure should be properly adjusted, for example
by using a block decomposition algorithm that exploits the particular structure of the objective function.
Additionally, the warm-start strategy could be stopped earlier, (i.e., smaller values of the parameter $\alpha$ could be employed),
even if this trades off with the accuracy of the $\ell_0$-norm approximation.


\section{Conclusions}\label{sec:conclusions}
We have presented a data filtering method for cluster analysis, based on combining two strategies:
the first one is the minimization of a least squares function with a weighted $\ell_0$-norm penalty, approximated by smooth parametric functions;
the second one is choosing the penalty parameter by minimizing a suitable criterion that considers the distances within and between clusters.
Promising results have been obtained from numerical simulations, performed on synthetic and real data sets.

\bibliography{cristofari2016}

\end{document}